\documentclass{amsart}
\usepackage{fullpage}\usepackage{pictex}
\usepackage{amsmath}
\usepackage{amssymb}
\usepackage{amsfonts}
\usepackage{amsopn}
\usepackage{epsfig}
\usepackage{amsfonts}
\usepackage{latexsym}
\usepackage{bm}
\usepackage{stmaryrd}
\usepackage{amsthm}
\usepackage{mathrsfs}

\newtheorem{theorem}{Theorem}[section]

\newtheorem{proposition}[theorem]{Proposition}

\newtheorem{remark}{Remark}

\newcommand{\R}{\mathbb R} \newcommand{\Z}{\mathbb Z}
\newcommand{\N}{\mathbb{N}} \newcommand{\Q}{\mathbb{Q}}

\usepackage{tikz}


\title{Sharpening  Vahlen's result in Diophantine approximation}

\author{Ayreena Bakhtawar
\and
Cor Kraaikamp
}
\newcommand{\Addresses}{{
  \bigskip
  \footnotesize

A.~Bakhtawar, \textsc{Centro di Ricerca Matematica Ennio De Giorgi, Scuola Normale Superiore, Piazza dei Cavalieri 3,
56126 Pisa, Italy and \\
Institute of Mathematics, Polish Academy of Sciences, ul.  Sniadeckich 8, 00-656
Warszawa, Poland}\par\nopagebreak
  \textit{E-mail address}: \texttt{ayreena.bakhtawar@sns.it,abakhtawar@impan.pl}

  \medskip

C. ~Kraaikamp\textsc{
Delft University of Technology, EWI (DIAM), Mekelweg 4, 2628 CD Delft, the Netherlands} 
\par\nopagebreak
\textit{E-mail address}: \texttt{c.kraaikamp@tudelft.nl}
}
}

\begin{document}

\begin{abstract}  \noindent In this paper we refine Vahlen's 1895 result in Diophantine approximation by providing sharper bounds for the approximation coefficients, especially when at least one of the partial quotients $a_n$ or $a_{n+1}$ of the regular continued fraction expansion $[a_0;a_1,a_2,\dots]$ of $x$ is 1. An improvement of Vahlen's result was already given in papers by Jaroslav Han\u{c}l (\cite{[H]}), Han\u{c}l and Silvie Bahnerova (\cite{[HB]}), and by Dinesh Sharma Bhattarai (\cite{[B]}), but the approach of the present paper is very different from Han\u{c}l c.s. We believe that the geometrical methods used in this paper not only offer a significant improvement over Vahlen's result, but also yield new insights that can contribute to improving Borel's classical constant.
\end{abstract}

\maketitle

\section{Introduction}\label{section1}

One of the classic results in Diophantine approximation is Vahlen's well-known theorem from 1895, \cite{[V]}. Let\footnote{Without loss of generality we will assume in this paper that $x\in [0,1).$} $x\in\R$, with \emph{regular continued fraction} (RCF) expansion $x=[a_0;a_1,a_2,\dots]$, where $a_0\in\Z$, such that $x-a_0\in [0,1)$, and\footnote{Recall that the RCF-expansion of $x\in\R$ is finite if and only if $x\in\Q$. In that case $x$ has two RCF-expansions. In all other cases the RCF-expansion of $x\in\R$ is unique.}  $a_i\in\N$, for $n\geq 1$. Furthermore, by taking finite truncation, let the sequence of \emph{regular continued fraction convergents} of $x$ be given by $(p_n/q_n)$; i.e., $p_n/q_n=[a_0;a_1,\dots,a_n]$ and $\text{gcd}\{ p_n,q_n\}=1$ for $n\geq 1$. Vahlen's theorem states that for $x\in\R$ and $n\geq 1,$
$$
\min \left\{ q_{n-1}^2\left| x-\frac{p_{n-1}}{q_{n-1}}\right| ,\,\, q_n^2\left| x-\frac{p_n}{q_n}\right| \right\} < \frac{1}{2}.
$$
In view of this result, as well as similar results on three consecutive convergents by Borel, Bagemihl \& McLaughlin, Tong and (many) others (see~\cite{[Bor1], [Bor2], [BMc], [DK], [T]}), we define the so-called \emph{approximation coefficients} $\Theta_n=\Theta_n(x)$ by
$$
\Theta_n(x) = q_n^2\left| x-\frac{p_n}{q_n}\right| ,\quad \text{for $n\geq 1$}.
$$
In 1903,  \'Emile Borel \cite{[Bor1], [Bor2]} showed, that if $x\in\R$, with three consecutive convergents $\tfrac{p_{n-1}}{q_{n-1}}$, $\tfrac{p_n}{q_n}$
and $\tfrac{p_{n+1}}{q_{n+1}}$, one has that
\begin{equation}\label{Borel}
\min \left\{ \Theta_{n-1}(x),\Theta_n(x),\Theta_{n+1}(x)\right\} < \frac{1}{\sqrt{5}}.
\end{equation}
To give these results by Vahlen and Borel (and later ones like~\eqref{improvementBorel}) some urgency we recall here a result by Legendre from 1798 (see~\cite{[L]}, and also~\cite{[BJ]}, for a more recent proof and generalization of Legendre's result); let  $A,B\in\Z$ with $B>0$, $\text{gcd}\{ A,B\}=1$ be such, that 
\begin{equation}\label{legendre}
B^2\left| x-\frac{A}{B}\right| < \frac{1}{2},
\end{equation}
then the rational number $A/B$ is an RCF-convergent of $x$. I.e., there exists an $n\in\N$, such that $A=p_n$ and $B=q_n$. So Legendre's result states that \emph{if} we want to approximate an irrational number $x$ well by a rational number $A/B$ (such that~\eqref{legendre} holds), this rational number is an RCF-convergent of $x$. Due to Vahlen's result we know that at least one of each two consecutive RCF-convergents of $x$ approximates $x$ well in this sense, and Borel's result shows that at least one in three consecutive RCF-convergents approximates $x$ even better. In fact, in~\cite{[BJ]} it is shown, that if
$$
B^2\left| x-\frac{A}{B}\right| < 1,
$$
we have that $A/B$ is either an RCF-convergent, or a so-called first of last mediant.

Only as recent as 2015, Vahlen's result has been sharpened by Jaroslav Han\u{c}l in~\cite{[H]}; at least one of two consecutive convergents $\tfrac{p_{n-1}}{q_{n-1}}$, $\tfrac{p_n}{q_n}$ satisfies the inequality
\begin{equation}\label{Hancl2015}
\left| x-\frac{p}{q}\right| < \left( 2 + \frac{2(q-1)}{q^2(q+1)}\right)^{-1} \cdot\frac{1}{q^2}.
\end{equation} 
The term $H(q):= \left( 2 + \frac{2(q-1)}{q^2(q+1)}\right)^{-1}$ in~\eqref{Hancl2015} has been further sharpened in 2021 by Han\u{c}l and Silvie Bahnerova in~\cite{[HB]}, and by Dinesh Sharma Bhattarai in 2023; ~\cite{[B]}.\smallskip

More recently, Borel's result was sharpened by Han\u{c}l and Kit Nair in~\cite{[HN]} who showed that if we replace the $<$ sign in~\eqref{Borel} with $\leq,$  the constant $\sqrt{5}$ can be replaced by $\sqrt{5}+\frac{4-5\sqrt{5}+\sqrt{61}}{2q^2}.$ In this paper we will mainly focus on sharpening the Vahlen result.

As the sequence $(q_n)_{n\geq1 }$ is rapidly growing (at least as fast as the \emph{Fibonacci numbers}, which we would get if $x$ is the golden mean $g=(\sqrt{5}-1)/2$), as a result $H(q)$ tends to $\tfrac{1}{2}$ quite quickly. For example, when $q=610$ (the 15th Fibonacci number), we have $H(q) =  0.4999987.$ Therefore for large $n,$ the improvement over Vahlen's result quickly diminishes, and the bound given in~\cite{[H]} along with improvements in~\cite{[HB],[B]} might be considered somewhat conservative, particularly when $a_{n+1}\geq 2.$ For further details see Sections~\ref{section2} and~\ref{section3}.
This rapid growth of $(q_n)_{n\geq1 }$ motivates us to explore a different sharpening of Vahlen's result, which is discussed in the next two sections. Unlike the previous approaches, this sharpening will not depend on the size of $q_{n}$, but rather on the local size of the partial quotients $a_{n+2}$  and $a_{n-1}$. By ``local size," we mean that it does not depend on the index $n$ or, more specifically, the sizes of $q_{n-1}$ or $q_{n}$.

In Section~\ref{section2}, we will sharpen the Vahlen's result whenever the pair of partial quotients $(a_n,a_{n+1})\neq (1,1)$. It should be mentioned that our approach in Section~\ref{section2} is to some extend implicit in the proofs of the results in~\cite{[H],[HB],[B]}, the difference being that we consider the values of two consecutive approximation coefficients simultaneously. In Section~\ref{section3}, we will tackle the more challenging case $(a_n,a_{n+1}) = (1,1)$.  We will show that improvement over Vahlen's result is still possible as soon as one also knows the value of at least one of the partial quotients $a_{n+2}$ and $a_{n-1}$.

\section{Using the Ito-Nakada-Tanaka natural extension of the RCF}\label{section2}
In 1981, Hitoshi Nakada (in~\cite{[N]}), and Shunji Ito and Shigeru Tanaka (in~\cite{[TI]}) introduced and studied their respective classes of $\alpha$-expansions. For $\alpha =1$ (and $\alpha=\tfrac{1}{2}$), these two classes coincide: $\alpha =1$ corresponds to the RCF, (and $\alpha =\tfrac{1}{2}$ corresponds to the \emph{nearest integer continued fraction} (NICF) expansion). Nakada obtained the \emph{natural extension} of the dynamical systems underlying his $\alpha$-expansions for all $\alpha\in [\tfrac{1}{2},1]$ (which also includes RCF case), derived the invariant measures, and showed that these dynamical systems are ergodic. In fact, he obtained a much stronger mixing property. Nakada's natural extension for the RCF was fundamental in the proof of the so-called \emph{Doeblin-Lenstra} conjecture in~\cite{[BJW]}, which described the distribution of the sequence of approximation coefficients 
$\Theta_n(x)$ of $x$ for almost every $x\in\R$ 
and played a key-role in the metrical theory of continued fractions in the last decades; see e.g.\ Chapter 4 in~\cite{[IK]}.\smallskip\

\subsection{Nakada's natural extension for the RCF}\label{subsectionNakada} Let $\mathcal{T}:[0,1)\times [0,1]$ be defined by
\begin{equation}\label{naturalextensionmap}
\mathcal{T}(x,y) = \left( T(x),\frac{1}{\left\lfloor \tfrac{1}{x}\right\rfloor + y}\right),\quad \text{if $(x,y)\in (0,1)\times [0,1]$},
\end{equation}
and $\mathcal{T}(0,y) = (0,y)$, for $y\in [0,1]$. Here $T:[0,1)\to[0,1)$ is the so-called \emph{Gauss map}, defined by $T(0)=0$, and
$$
T(x) = \frac{1}{x}-\left\lfloor \frac{1}{x}\right\rfloor ,\quad \text{for $x\in (0,1)$}.
$$
For $x\in (0,1)$ and $n\geq 1$, with RCF-convergents $(\tfrac{p_n}{q_n})$,  we define 
$$
t_n=t_n(x) := T^n(x)\quad \text{as \emph{the future of $x$ at time $n$}},
$$
and 
$$
v_n=v_n(x) := \frac{q_{n-1}}{q_n}\quad \text{as \emph{the past of $x$ at time $n$}}.
$$
Note that if $x=[0;a_1,a_2,\dots]$, we have that $t_n=[0;a_{n+1},a_{n+2},\dots],$ and from the well-known recurrence relations for the $q_n$ we furthermore have that $v_n=[0;a_n,a_{n-1}\dots,a_1]$; see, e.g.~pages 7--10 in~\cite{[Sch]}. It follows from the definition of $\mathcal{T}$ in~\eqref{naturalextensionmap} that:
$$
\mathcal{T}^n(x,0) = (t_n,v_n),\quad \text{for $n\geq 0$}.
$$
A classical result is now that:
\begin{equation}\label{thetan}
\Theta_n(x) = \frac{t_n}{1+t_nv_n},\quad \text{for $n\geq 1$};
\end{equation}
see e.g.~\cite{[DK], [IK], [K]}, but also (2) in~\cite{[B]} and (3.2) in~\cite{[H]}. In fact, from~\eqref{thetan} and the definition of the Gauss map $T$, one easily finds that:
\begin{equation}\label{thetan-1}
\Theta_{n-1}(x) = \frac{v_n}{1+t_nv_n},\quad \text{for $n\geq 1$}.
\end{equation}
In view of~\eqref{thetan} and~\eqref{thetan-1}, Henk Jager and the second author introduced in~\cite{[JK]} the map $\Psi: \Omega := [0,1]\times [0,1]\to \R^2$ defined by
$$
\Psi (t,v) = \left( \frac{v}{1+tv},\frac{t}{1+tv}\right).
$$
In~\cite{[JK]}, it is shown that $\Psi (\Omega)$ is the  triangle $\Delta$ in $\R^2$ with vertices $(0,0)$, $(0,1)$, and $(1,0)$. Since $(\Theta_{n-1},\Theta_n) = \Psi (t_n,v_n)$, for $n\geq 1$, we find that $\Theta_{n-1} + \Theta_n < 1$, which implies Vahlen's result: 
$$\min \{ \Theta_{n-1},\Theta_n\} < \tfrac{1}{2}.$$

Furthermore, apart from a set of Lebesgue measure 0, the map $\Psi :\Omega\to\Delta$ is bijective. If we define the map $F:\Delta\to\Delta$ by $F=\Psi\circ \mathcal{T}\circ \Psi^{-1}$, we find for two consecutive $\Theta_{n-1}$ and $\Theta_n$ that
$$
F(\Theta_{n-1},\Theta_n) = \left( \Theta_n,\Theta_{n+1}\right) ,\quad \text{for $n\geq 1$}.
$$
As a result, one can derive the formula of  W.B.~Jurkat and A.~Peyerimhoff (\cite{[JP]}):
\begin{equation}\label{thetasA}
\Theta_{n+1} = \Theta_{n-1} +a_{n+1}\sqrt{1-4\Theta_{n-1}\Theta_n} - a_{n+1}^2\Theta_n,
\end{equation}
but also, 
$$ 
\Theta_{n-1} = \Theta_{n+1} + a_{n+1}\sqrt{1-4\Theta_n\Theta_{n+1}} - a_{n+1}^2\Theta_n.
$$ 
See also~\cite{[JK]}, where~\eqref{thetasA} was used to obtain, in a simple way, a generalization of Borel's celebrated result, previously obtained among others by N.~Obrechkoff (\cite{[O]}) and by F.~Bagemihl and J.R.\ McLaughlin (\cite{[BMc]}):
\begin{equation}\label{improvementBorel}
\min \{ \Theta_{n-1},\Theta_n,\Theta_{n+1}\} \leq \frac{1}{\sqrt{a_{n+1}^2+4}}\leq \frac{1}{\sqrt{5}},\quad \text{for $n\geq 1$},
\end{equation}
but also a 1983 result by Jingcheng Tong (\cite{[T]}):
$$
\max \{ \Theta_{n-1},\Theta_n,\Theta_{n+1}\} \geq \frac{1}{\sqrt{a_{n+1}^2+4}},\quad \text{for $n\geq 1$}.
$$

\subsection{The ``easy case'' $(a_n,a_{n+1})\neq (1,1)$}\label{subsec:valensconstants}
Now, in view of~\eqref{thetan} and~\eqref{thetan-1}, we define for $a\in\N$ the sets $V_a$ and $H_a$ in $\Omega$ where the partial quotients $a_{n+1}$ resp.\ $a_n$ are constant whenever $(t_n,v_n)\in V_a$ resp.\ $(t_n,v_n)\in H_a$:
$$
V_a = \left\{ (x,y)\in\Omega\,\, \Big{|}\,\, \frac{1}{a+1} < x\leq \frac{1}{a}\right\} ,
$$
as shown in Figure~\ref{figure1}, and 
$$
H_a = \left\{ (x,y)\in\Omega\,\, \Big{|}\,\, \frac{1}{a+1} < y\leq \frac{1}{a}\right\},
$$
One can easily show that $\mathcal{T}(V_a)=H_a$ for all $a\in\N$. Additionally the following conditions hold,
\begin{eqnarray*}
\mathcal{T}^n(x,y)\in V_a &\Leftrightarrow & a_{n+1}=a,\,\, n\geq 0;\\
\mathcal{T}^n(x,y)\in H_a &\Leftrightarrow & a_n=a,\,\, n\geq 1.
\end{eqnarray*}

Next, note that if $c\in [0,1]$ is a constant, then $\Psi$ maps the vertical line segment $t=c$, $v\in [0,1]$, to the intersection of the line $\beta = -c^2\alpha +c$ with $\Delta$, and $\Psi$ maps the horizontal line segment $v=c$, $t\in [0,1]$, to the intersection of the line $\beta = -\tfrac{1}{c^2}\alpha +\tfrac{1}{c}$ with $\Delta$. Due to this, for $a\geq 2,$ we have that $\Psi (V_a)$ is a quadrangle in $\Delta,$ with vertices 
$$
\Psi \left(\tfrac{1}{a},0\right) = \left( 0,\tfrac{1}{a}\right),\quad \Psi \left( \tfrac{1}{a+1},0\right) = \left( 0,\tfrac{1}{a+1}\right),\quad \Psi \left( \tfrac{1}{a+1},1\right) = \left( \tfrac{a+1}{a+2},\tfrac{1}{a+2}\right),
$$
and ${\displaystyle \Psi \left( \tfrac{1}{a},1\right) = \left( \tfrac{a}{a+1},\tfrac{1}{a+1}\right)}$. For $a=1$, we have that $\Psi (V_a)$ is a triangle in $\Delta$, with vertices 
$$
\left( 0,\tfrac{1}{2}\right),\quad \left( \tfrac{2}{3},\tfrac{1}{3}\right),\quad \text{and $(0,1)$}.
$$
\begin{figure}[h]
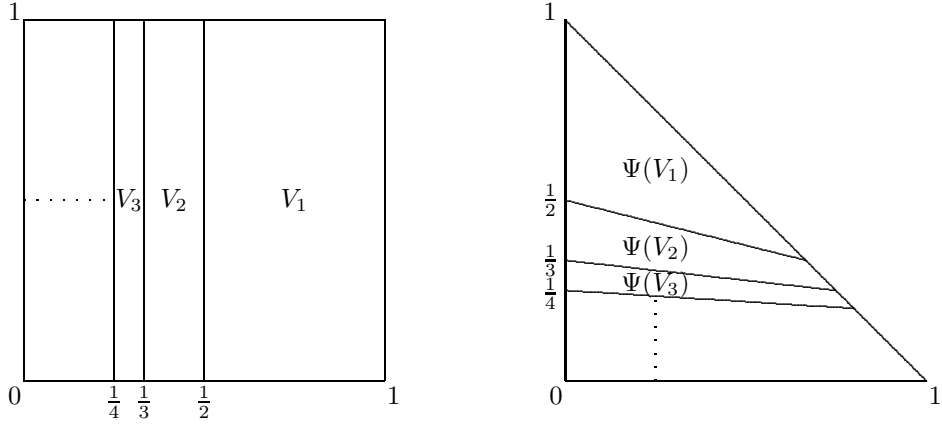

$$
\beginpicture
    \setcoordinatesystem units <0.4cm,0.4cm>
    \setplotarea x from -15 to 15, y from 0 to 12
    \putrule from 3 0 to 3 12
    \putrule from 3 0 to 15 0
    \put {$0$} at 2.7 -0.5
    \put {$0$} at -15.3 -0.5
    \put {$1$} at 15.3 -0.5
    \put {$1$} at -2.7 -0.5
    \put {$1$} at 2.5 12.3
    \put {$1$} at -15.3 12.3
    \putrule from -15 0 to -3 0
    \putrule from -15 0 to -15 12
    \putrule from -3 0 to -3 12
    \putrule from -15 12 to -3 12
    \putrule from -9 0 to -9 12
\putrule from -11 0 to -11 12
\putrule from -12 0 to -12 12

    \put {$\tfrac{1}{2}$} at 2.5 6
    \put {$\tfrac{1}{2}$} at -9 -0.7
    \put {$\tfrac{1}{3}$} at 2.5 4
    \put {$\tfrac{1}{3}$} at -11 -0.7
    \put {$\tfrac{1}{4}$} at 2.5 2.9
    \put {$\tfrac{1}{4}$} at -12 -0.7
    \put {$\Psi(V_1)$} at 6 7
    \put {$\Psi(V_2)$} at 6 4.4
    \put {$\Psi(V_3)$} at 6 3.2
    \put {$V_1$} at -6 6
    \put {$V_2$} at -10 6
    \put {$V_3$} at -11.5 6 

\setlinear \plot
3 12 15 0
/
\setlinear \plot
3 6 11 4
/
\setlinear \plot
3 4 12 3
/
\setlinear \plot
3 3 12.6 2.4
/

\setdots
\putrule from 6 0 to 6 3
\putrule from -15 6 to -12 6

\endpicture
$$ \caption[triangle]{The natural extension $\Omega$ of the RCF (left), and its image $\Delta$ under $\Psi$ (right)}\label{figure1}
\end{figure}
Thus $\Psi (V_a)$ is bounded by the lines: $\alpha = 0$, $\alpha+\beta =1$, $\beta = -\frac{\alpha}{a^2}+\frac{1}{a}$ (this is the `top-line' of $\Psi (V_a)$), and $\beta = -\frac{\alpha}{(a+1)^2}+\frac{1}{a+1}$ (this is the `bottom-line' of $\Psi (V_a)$, and the `top-line' of $\Psi (V_{a+1})$); see Figure~\ref{figure1}. Also note, that $\Psi (H_a)$ is the reflection of $\Psi (V_a)$ in the line $\beta = \alpha$. So $\Psi (H_a)$ is bounded by the lines $\beta =0$, $\alpha +\beta =1$, the ``right-hand line'' $\beta = -a^2\alpha + a$, and ``left-hand line'' $\beta = -(a+1)^2\alpha+(a+1)$.\medskip\

Now, let $m=\min \{ a_n,a_{n+1}\}$ and $M=\max \{ a_n,a_{n+1}\}$. In the next section, i.e., Section \ref{section3},  we will consider the more difficult case where $M=1$ (so when both $a_n$ and $a_{n+1}$ are equal to $1$). Here, we consider results that improve upon Vahlen's result in the case where $M\geq 2$. We have the following results for this case.

\begin{theorem}\label{theoremeasycase}
Let $x=[0;a_1,a_2,\dots, a_n,a_{n+1},\dots]$, $m=\min \{ a_n,a_{n+1}\}\geq 1$, and $M=\max \{ a_n,a_{n+1}\}\geq 2$. We consider the following cases.
\begin{itemize}
\item[($i$)]
Let $m=M\geq 2$, then we have that:
$$
\min\{ \Theta_{n-1}(x),\Theta_n(x)\} \leq \tfrac{m}{m^2+1}\leq \tfrac{2}{5},
$$
and
$$
\max\{ \Theta_{n-1}(x),\Theta_n(x)\} \geq \tfrac{m+1}{(m+1)^2+1}.
$$

\item[($ii$)]
Let $1\leq m < M$, then we have that:
$$
\min\{ \Theta_{n-1}(x),\Theta_n(x)\} \leq \tfrac{m+1}{(m+1)M+1} \leq \tfrac{2}{5},
$$
and
$$
\max \{ \Theta_{n-1}(x),\Theta_n(x)\} \geq \tfrac{M}{(m+1)M+1}.
$$
\end{itemize}
\end{theorem}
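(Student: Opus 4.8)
The plan is to move everything to the natural extension and reduce both assertions to an elementary two–variable optimization. Writing $(t,v)=(t_n,v_n)=\mathcal T^n(x,0)$, the formulas~\eqref{thetan} and~\eqref{thetan-1} read
$$
\Theta_{n-1}(x)=\frac{v}{1+tv},\qquad \Theta_n(x)=\frac{t}{1+tv},
$$
while the defining inequalities of $V_{a_{n+1}}$ and $H_{a_n}$ say exactly that $(t,v)$ lies in the box
$$
B:=\Bigl(\tfrac{1}{a_{n+1}+1},\tfrac{1}{a_{n+1}}\Bigr]\times\Bigl(\tfrac{1}{a_n+1},\tfrac{1}{a_n}\Bigr].
$$
So it suffices to bound $\min\{\tfrac{v}{1+tv},\tfrac{t}{1+tv}\}$ from above and $\max\{\tfrac{v}{1+tv},\tfrac{t}{1+tv}\}$ from below over $(t,v)\in B$. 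Both quantities are symmetric in $t$ and $v$, and swapping the roles of $a_n$ and $a_{n+1}$ simply interchanges $t$ and $v$; hence in case~($ii$) I may assume $a_n=m$, $a_{n+1}=M$. The only analytic input is that, for a fixed positive value of the other variable, $v\mapsto\tfrac{t}{1+tv}$ and $t\mapsto\tfrac{v}{1+tv}$ are strictly decreasing, $t\mapsto\tfrac{t}{1+tv}$ and $v\mapsto\tfrac{v}{1+tv}$ are strictly increasing, and $s\mapsto\tfrac{s}{1+s^2}$ is strictly increasing on $(0,1)$.

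\emph{Case ($ii$): $1\le m<M$.} On $B$ we have $t\le\tfrac1M\le\tfrac1{m+1}<v$, so $\Theta_n<\Theta_{n-1}$; thus $\min\{\Theta_{n-1},\Theta_n\}=\Theta_n$ and $\max\{\Theta_{n-1},\Theta_n\}=\Theta_{n-1}$. By the monotonicities, on $B$ the quantity $\Theta_n=\tfrac{t}{1+tv}$ is largest, and $\Theta_{n-1}=\tfrac{v}{1+tv}$ is smallest, at $t=\tfrac1M$, $v\downarrow\tfrac1{m+1}$; evaluating there gives $\Theta_n<\tfrac{m+1}{(m+1)M+1}$ and $\Theta_{n-1}>\tfrac{M}{(m+1)M+1}$, the strictness coming from the fact that the endpoint $v=\tfrac1{m+1}$ does not lie in $B$. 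This yields the asserted (non–strict) bounds, and finally $\tfrac{m+1}{(m+1)M+1}\le\tfrac{m+1}{(m+1)^2+1}\le\tfrac25$, where the first step uses $M\ge m+1$ and the second is the inequality $(2m+1)(m-1)\ge0$, valid for $m\ge1$.

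\emph{Case ($i$): $m=M\ge2$.} Here the order of $\Theta_{n-1}$ and $\Theta_n$ is not forced, so put $s=\min\{t,v\}$ and $\ell=\max\{t,v\}$, with $\tfrac1{m+1}<s\le\ell\le\tfrac1m$. Then $\min\{\Theta_{n-1},\Theta_n\}=\tfrac{s}{1+s\ell}$ decreases in $\ell$, hence is $\le\tfrac{s}{1+s^2}$, which increases in $s$, hence is $\le\tfrac{1/m}{1+1/m^2}=\tfrac{m}{m^2+1}$; the endpoint $s=\ell=\tfrac1m$ does belong to $B$, so this is exactly the asserted $\le$, and $\tfrac{m}{m^2+1}\le\tfrac25$ is $(2m-1)(m-2)\ge0$. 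Symmetrically, $\max\{\Theta_{n-1},\Theta_n\}=\tfrac{\ell}{1+s\ell}$ decreases in $s$, hence is $\ge\tfrac{\ell}{1+\ell^2}$, which increases in $\ell$, hence is $>\tfrac{1/(m+1)}{1+1/(m+1)^2}=\tfrac{m+1}{(m+1)^2+1}$; the inequality is strict but in particular gives the claimed $\ge$.

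I do not expect a real obstacle: geometrically the four extremal values are just $\Psi$ evaluated at vertices of the quadrilateral $\Psi(V_{a_{n+1}}\cap H_{a_n})$ — the two vertices of $\Psi(V_m\cap H_m)$ lying on the line $\alpha=\beta$ in case~($i$), and the single vertex $\Psi\bigl(\tfrac1M,\tfrac1{m+1}\bigr)$ in case~($ii$) — so the one point demanding care is the bookkeeping of which endpoints of $B$ are attained, in order to land on $\le$ (respectively $\ge$) precisely where the statement asks for it while obtaining strict inequalities elsewhere.
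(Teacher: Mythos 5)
Your proof is correct and follows essentially the same route as the paper: both localize $(t_n,v_n)$ to the box determined by $a_n$ and $a_{n+1}$ and then observe that the extremal values of $\Psi$ are attained at the appropriate corners. The only difference is one of presentation — you carry out the optimization directly in $(t,v)$-coordinates via monotonicity of $\tfrac{t}{1+tv}$ and $\tfrac{v}{1+tv}$, whereas the paper reasons geometrically about the vertices of the quadrangle $\Psi(V_M)\cap\Psi(H_m)$ in $\Delta$ (using the level sets of $\alpha+\beta$ in case ($i$)); your version makes the paper's ``one easily observes'' steps explicit.
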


\begin{remark}\label{remarkVahlen}{\rm
Note that the results of case~($i$) are also valid if $M=1.$ However, in this case, there is no improvement of the Vahlen bound $\tfrac{1}{2}$.\hfill $\triangle$}
\end{remark}

\begin{proof}
In case $m=M\geq 2$, we observe that $\Psi (V_m)\cap \Psi (H_m)$ is a quadrangle in $\Delta$ with vertices:
$$
\Psi \left( \tfrac{1}{m},\tfrac{1}{m} \right) = \left( \tfrac{m}{m^2+1},\tfrac{m}{m^2+1}\right),\,\, \Psi \left( \tfrac{1}{m},\tfrac{1}{m+1} \right) = \left( \tfrac{m}{m^2+m+1}, \tfrac{m+1}{m^2+m+1}\right), \,\, \Psi \left( \tfrac{1}{m+1},\tfrac{1}{m} \right) = \left( \tfrac{m+1}{m(m+1)+1}, \tfrac{m}{m(m+1)+1}\right),
$$
and ${\displaystyle \Psi \left( \tfrac{1}{m+1},\tfrac{1}{m+1} \right) = \left( \tfrac{m+1}{(m+1)^2+1},\tfrac{m+1}{(m+1)^2+1}\right)}$. Now, one easily observes that $\Psi (V_m)\cap \Psi (H_m)$ is symmetric in the diagonal $\beta = \alpha$, and that on $\Psi (V_m)\cap \Psi (H_m),$ the function $f(\alpha,\beta)=\alpha+\beta$ attains the maximum at the point ${\displaystyle \left( \tfrac{m}{m^2+1},\tfrac{m}{m^2+1}\right)}$, the maximum value being $\tfrac{2m}{m^2+1}$.
But then we immediately find, that for $(\Theta_{n-1}(x),\Theta_n(x))\in \Psi (V_m)\cap \Psi (H_m)$ we have that:
$$
\min \{ \Theta_{n-1}(x),\Theta_n(x)\} \leq \tfrac{m}{m^2+1}\leq \tfrac{2}{5}.
$$
Similarly, we see that on $\Psi (V_m)\cap \Psi (H_m)$ the function $f(\alpha,\beta)=\alpha+\beta$ attains the minimum at the point  ${\displaystyle \left( \tfrac{m+1}{(m+1)^2+1},\tfrac{m+1}{(m+1)^2+1}\right)}$, the minimum value being $\tfrac{2(m+1)}{(m+1)^2+1}$. Thus, we obtain:
$$
\max\{ \Theta_{n-1}(x),\Theta_n(x)\} \geq \tfrac{m+1}{(m+1)^2+1}.
$$
This completes the proof of  case ($i$). \medskip\

The proof of case ($ii$) is similar to the proof of case ($i$); the only difference is that the calculations are slightly nastier, as $1\leq m<M$. Now we must consider either $\Psi (V_m)\cap \Psi (H_M)$ or $\Psi (V_M)\cap \Psi (H_m)$, but by symmetry we only need to consider one of these two cases, say, $\Psi (V_M)\cap \Psi (H_m)$. Of course, a quick \& dirty upper bound for the Vahlen constant can be found by realizing that $(\Theta_{n-1}(x),\Theta_n(x))\in \Psi (V_M)\cap \Psi (H_m)$ implies that $(\Theta_{n-1}(x),\Theta_n(x))\in \Psi (V_M)$, and therefore we find $$\min \{ \Theta_{n-1}(x),\Theta_n(x)\} < \tfrac{M}{M^2+1}\leq \tfrac{2}{5}.$$ In this case, $\Psi (V_M)\cap \Psi (H_m)$ is again a quadrangle, bounded by the lines: $\beta=-(m+1)^2\alpha+(m+1)$, $\beta=-m^2\alpha+m$ (the ``left-'' resp.\ ``right-hand'' line), $\beta = -\frac{\alpha}{(M+1)^2} +\frac{1}{M+1}$, and $\beta = -\frac{\alpha}{M^2}+\frac{1}{M}$ (the ``bottom-'' resp.\ ``top-line''). This quadrangle has vertices
\begin{eqnarray*}
\Psi \left( \tfrac{1}{M},\tfrac{1}{m+1}\right) = \left( \tfrac{M}{(m+1)M+1}, \tfrac{m+1}{(m+1)M+1}\right), && \Psi \left( \tfrac{1}{M},\tfrac{1}{m}\right) = \left( \tfrac{M}{mM+1},\tfrac{m}{mM+1}\right),\\
\Psi \left( \tfrac{1}{M+1},\tfrac{1}{m+1}\right) = \left( \tfrac{M+1}{(m+1)(M+1)+1}, \tfrac{m+1}{(m+1)(M+1)+1}\right), &&
\Psi \left( \tfrac{1}{M+1},\tfrac{1}{m}\right) = \left( \tfrac{M+1}{m(M+1)+1},\tfrac{m}{m(M+1)+1}\right) .
\end{eqnarray*}
These are resp.\ the top-left, top-right, bottom-left and bottom-right vertex of $I_{M,m}=\Psi (V_M)\cap \Psi (H_m)$; see also Figures~\ref{figure21} and~\ref{figure2}.
\begin{figure}[h]
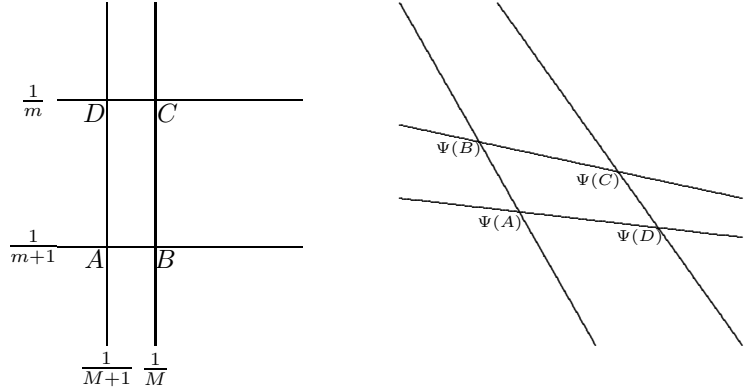

$$
\beginpicture
    \setcoordinatesystem units <0.65cm,0.65cm>
    \setplotarea x from 0 to 12, y from 0 to 6
    \putrule from 0 2 to 5 2
    \putrule from 0 5 to 5 5
    \putrule from 1 0 to 1 7
    \putrule from 2 0 to 2 7
    \put {$\tfrac{1}{M+1}$} at 1 -0.5
    \put {$\tfrac{1}{M}$} at 2 -0.5
    \put {$\tfrac{1}{m+1}$} at -0.5 2
    \put {$\tfrac{1}{m}$} at -0.5 5
    \put {$A$} at 0.75 1.75
    \put {$B$} at 2.2 1.75
    \put {$C$} at 2.25 4.75
    \put {$D$} at 0.75 4.75
    \put {\tiny$\Psi (A)$} at 9.05 2.5
    \put {\tiny$\Psi (B)$} at 8.2 4
    \put {\tiny$\Psi (C)$} at 11.05 3.35
    \put {\tiny$\Psi (D)$} at 11.9 2.2

\setlinear \plot
9 7 14 0
/
\setlinear \plot
7 7 11 0
/
\setlinear \plot
7 4.5 14 3
/
\setlinear \plot
7 3 14 2.2
/

\endpicture
$$ \caption[triangle]{Left: $V_M\cap H_m$ in $\Omega$. Right:  $I_{M,m}=\Psi(V_M\cap H_m)$ in $\Delta$} \label{figure21}
\end{figure}

Since $M>m\geq 1$, we observe from the vertices that $\Psi (V_M)\cap \Psi (H_m)$ ``lies below'' the diagonal $\beta =\alpha$. Thus, at the point ${\displaystyle \left( \tfrac{M}{(m+1)M+1}, \tfrac{m+1}{(m+1)M+1}\right)}$, which is the top left-hand vertex of $I_{M,m}$,  the $y$-value represents the largest value $\Theta_n(x)$ can attain on $\Psi (V_M)\cap \Psi (H_m)$. Hence we have: 
$$
\min \{ \Theta_{n-1}(x),\Theta_n(x)\} \leq \tfrac{m+1}{(m+1)M+1} \leq \tfrac{2}{5}.
$$
Similarly, at the same point ${\displaystyle \left( \tfrac{M}{(m+1)M+1}, \tfrac{m+1}{(m+1)M+1}\right)}$ the $x$-value represents the smallest value $\Theta_{n-1}(x)$ can attain on $\Psi (V_M)\cap \Psi (H_m)$. Thus, since $\Theta_{n-1}(x)>\Theta_n(x)$ on $\Psi (V_M)\cap \Psi (H_m)$, we have
$$
\max \{ \Theta_{n-1}(x),\Theta_n(x)\} \geq \tfrac{M}{(m+1)M+1}.
$$
\end{proof}

\begin{figure}[h]
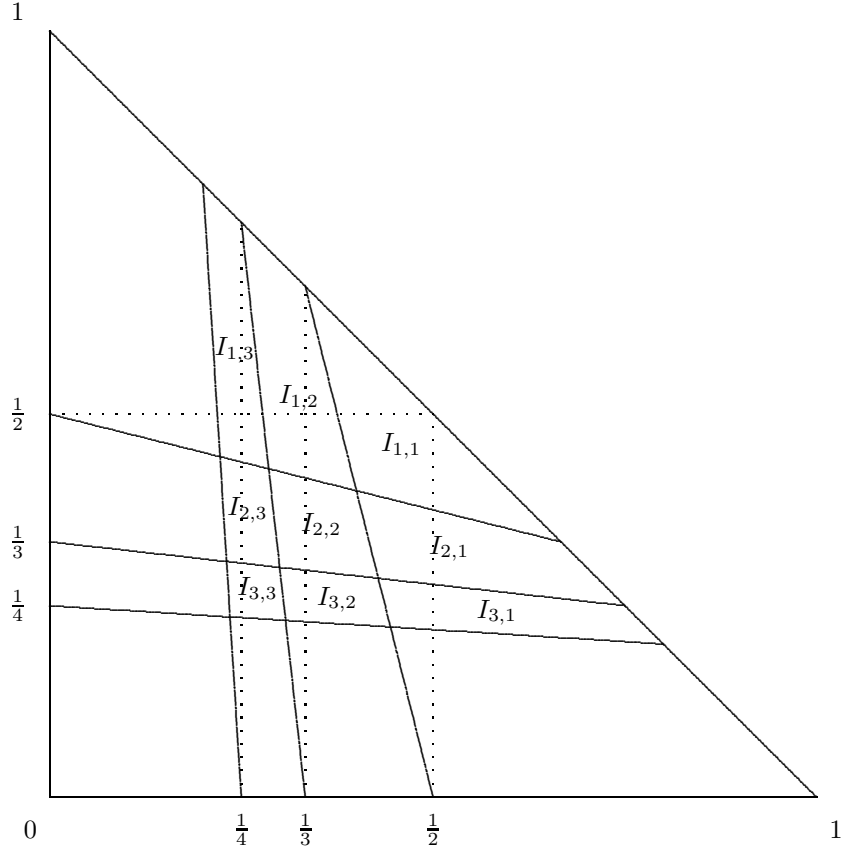

$$
\beginpicture
    \setcoordinatesystem units <0.85cm,0.85cm>
    \setplotarea x from 0 to 12, y from 0 to 10
    \putrule from 0 0 to 0 12
    \putrule from 0 0 to 12 0
    \put {$0$} at -0.3 -0.5
    \put {$1$} at 12.3 -0.5
    \put {$1$} at -0.5 12.3
    \put {$I_{1,1}$} at 5.5 5.5
    \put {$I_{2,2}$} at 4.25 4.25
    \put {$I_{3,2}$} at 4.5 3.1
    \put {$I_{2,3}$} at 3.1 4.5
    \put {$I_{3,3}$} at 3.25 3.25
    \put {$I_{2,1}$} at 6.25 3.9
    \put {$I_{3,1}$} at 7 2.9
    \put {$I_{1,3}$} at 2.9 7
    \put {$I_{1,2}$} at 3.9 6.25

    \put {$\tfrac{1}{2}$} at -0.5 6
    \put {$\tfrac{1}{2}$} at 6 -0.5
    \put {$\tfrac{1}{3}$} at -0.5 4
    \put {$\tfrac{1}{3}$} at 4 -0.5
    \put {$\tfrac{1}{4}$} at -0.5 3
    \put {$\tfrac{1}{4}$} at 3 -0.5

\setlinear \plot
0 12 12 0
/
\setlinear \plot
0 6 8 4
/
\setlinear \plot
6 0 4 8
/
\setlinear \plot
0 4 9 3
/
\setlinear \plot
4 0 3 9
/
\setlinear \plot
0 3 9.6 2.4
/
\setlinear \plot
3 0 2.4 9.6
/

\setdots
\putrule from 0 6 to 6 6
\putrule from 6 0 to 6 6
\putrule from 4 0 to 4 8
\putrule from 3 0 to 3 9
\endpicture
$$ \caption[triangle]{Intersections $I_{i,j}=\Psi(V_i\cap H_j)$ in $\Delta$, for $i,j=1,2,3$} \label{figure2}
\end{figure}

\begin{remark}\label{remarkVahlen2}{\rm
To determine the quadrangles $I_{M,m} = \Psi \left( V_m\cap H_m\right) = \Psi (V_m)\cap \Psi (H_m)$ the calculations from part ($ii$) suffice; just substitute $m=M$ in the above vertices in case ($ii$) to find the vertices from case ($i$). However, since in case ($i$) the quadrangle $I_{M,m}$ is on the diagonal $\beta = \alpha$, while in case ($ii$) the quadrangle $I_{M,m}$ is ``under'' $\beta = \alpha$, and $I_{m,M}$ is ``above'' $\beta = \alpha$, the vertices of these quadrangles yielding $\min \{ \Theta_{n-1},\Theta_n\}$ and $\max \{ \Theta_{n-1},\Theta_n\}$ on these quadrangles would be different, and we still would need to consider two different cases.\hfill $\triangle$}
\end{remark}

\section{The ``difficult case'' $(a_n,a_{n+1}) = (1,1)$}\label{section3}
In Subsection~\ref{subsec:valensconstants}, we postponed the case where $a_n=1=a_{n+1}$ to this section. To motivate this, recall from~\cite{[J1],[J2]} that the dynamical system $(\Delta, \mu,F)$, with $\mu$  a measure on $\Delta$, defined by
$$
\mu (E) = \frac{1}{\log 2}\int_E \frac{{\rm d}\alpha\, {\rm d}\beta}{\sqrt{1-4\alpha\beta}},\quad E\subseteq \Delta,\,\, \text{$E$ Borel measurable},
$$
forms an ergodic system. Obviously, problems arise when both $\Theta_{n-1}(x)$ and $\Theta_n(x)$ are close to $\tfrac{1}{2}$. By the definition of $\Psi,$ this occurs precisely  when both $t_n$ and $v_n$ are close to 1. To investigate this deeper, let $d\in\N$, and let $x_d$ be the rational number, given by
$$
x=\frac{1}{\displaystyle d+\frac{1}{\displaystyle 1 +\frac{1}{\displaystyle 1+\frac{1}{\displaystyle d}}}};
$$
i.e., $x_d=[0;d,1,1,d]$. Using the well-known recursion relations for $p_n$ and $q_n$, or through simple calculation, one finds
$$
\frac{p_1}{q_1}=\frac{1}{d},\quad \frac{p_2}{q_2} = \frac{1}{d+1},\quad \frac{p_3}{q_3}=\frac{2}{2d+1},\quad \frac{p_4}{q_4}=x_d=\frac{2d+1}{2d^2+2d+1}.
$$
In the context of the previous section, we have that: $t_2=[0;1,d]=v_2$, and thus we immediately obtain
$$
\Theta_1=\Theta_2 = \frac{t_2}{1+t_2v_2} = \frac{\frac{d}{d+1}}{1+\left(\frac{d}{d+1}\right)^2} = \frac{d(d+1)}{d^2+(d+1)^2} = \frac{d(d+1)}{2d^2+2d+1}.
$$
This result can also be derived directly  from the definitions of $\Theta_1$ and $\Theta_2$:
$$
\Theta_1 = q_1^2\left| x_d-\frac{p_1}{q_1}\right| = d^2\left| \frac{2d+1}{2d^2+2d+1}-\frac{1}{d}\right| = \frac{d(d+1)}{2d^2+2d+1},
$$
and similarly for $\Theta_2$, where we use that $q_2=d+1$. For example, when $d=100$ we have $\Theta_1=\Theta_2=0.4999752$, while in this case, one easily finds that $H(q_1) = 0.499951$ and $H(q_2)=0.499952$. Furthermore,
$$
\Theta_3=q_3^2\left| x_d-\frac{p_3}{q_3}\right| = (2d+1)^2\left| \frac{2d+1}{2d^2+2d+1}-\frac{2}{2d+1}\right| = \frac{2d+1}{2d^2+2d+1}.
$$
So for $d=100,$ one finds $\Theta_3=\frac{201}{20201}=0.009950002$. Obviously, since $x_d=\frac{p_4}{q_4}$, we have that $\Theta_4=0$. This leads to the following proposition, which demonstrates that the example with $d=100$ is not a coincidence.

\begin{proposition}\label{prop:counterexample}
Let $d\in\N$, $d\geq 4$, and let $x=[0;d,1,1,d]$. Then we have that:
$$
H(q_1) \leq \Theta_1\quad \text{ and }\quad H(q_2) \leq \Theta_2.
$$
\end{proposition}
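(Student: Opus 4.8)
The plan is to reduce both inequalities to elementary polynomial inequalities in $d$ and then verify these for $d\geq 4$. The essential simplification is already available from the computation immediately preceding the proposition: one has $q_1=d$, $q_2=d+1$, and
$$
\Theta_1=\Theta_2=\frac{d(d+1)}{2d^2+2d+1}.
$$
Since $2d^2+2d+1=2d(d+1)+1$, this gives the convenient form $\frac{1}{\Theta_1}=\frac{1}{\Theta_2}=2+\frac{1}{d(d+1)}$. Because $H(q_i)$ and $\Theta_i$ are both positive, each desired inequality $H(q_i)\leq\Theta_i$ is equivalent, after inverting, to $\frac{1}{\Theta_i}\leq\frac{1}{H(q_i)}=2+\frac{2(q_i-1)}{q_i^2(q_i+1)}$, that is, to $\frac{1}{d(d+1)}\leq\frac{2(q_i-1)}{q_i^2(q_i+1)}$.

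First I would treat $i=1$, i.e.\ $q_1=d$: the inequality becomes $\frac{1}{d(d+1)}\leq\frac{2(d-1)}{d^2(d+1)}$, and multiplying through by the positive quantity $d^2(d+1)$ turns this into $d\leq 2(d-1)$, i.e.\ $d\geq 2$. Hence $H(q_1)\leq\Theta_1$ holds for every $d\geq 2$, in particular for $d\geq 4$.

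Next I would treat $i=2$, i.e.\ $q_2=d+1$: here the inequality reads $\frac{1}{d(d+1)}\leq\frac{2d}{(d+1)^2(d+2)}$, and multiplying through by the positive quantity $d(d+1)^2(d+2)$ turns it into $(d+1)(d+2)\leq 2d^2$, i.e.\ $d^2-3d-2\geq 0$. The larger root of $d^2-3d-2$ is $\tfrac{3+\sqrt{17}}{2}\approx 3.56$, so this holds precisely for $d\geq 4$; indeed it fails at $d=3$, where $d^2-3d-2=-2<0$. This is exactly where the hypothesis $d\geq 4$ in the statement comes from.

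There is no genuine obstacle — the argument is a short computation — but the point to be careful about is the bookkeeping with denominators: one must note that all the factors one clears ($d^2(d+1)$ in the first case, $d(d+1)^2(d+2)$ in the second) are positive for $d\in\N$, which is immediate, so that the inequalities are preserved. The only mildly substantive observation is that the threshold $d\geq 4$ is forced by the second inequality and not the first.
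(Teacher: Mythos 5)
Your proof is correct and follows essentially the same route as the paper: invert both sides and reduce each inequality to an elementary polynomial inequality in $d$ (the paper works with $q=d+1$, arriving at $5q\leq q^2+2$, which is your $d^2-3d-2\geq 0$). The only difference is that you also write out the first inequality explicitly, whereas the paper treats only the second, noting the first is easier; your observation that the threshold $d\geq 4$ is forced by the second inequality alone is accurate.
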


\begin{proof}
We will only provide a proof of the second statement, as it is slightly more involved than the first one. Note that $q=q_2=d+1.$ Thus, we need to show that
$$
\left( 2 +\frac{2(q-1)}{q^2(q+1)}\right)^{-1} \leq\,\, \frac{q(q-1)}{2(q-1)^2+2(q-1)+1},
$$
which is equivalent to
$$
\frac{2(q-1)((q-1)+1)+1}{(q-1)q} \leq 2 +\frac{2(q-1)}{q^2(q+1)}.
$$
Simplifying this expression, we get
$$
\frac{1}{q-1} \leq \frac{2(q-1)}{q(q+1)},
$$
which can further shown to be equivalent to $5q\leq q^2+2$, which holds as $q=d+1$ and we assumed that $d\geq 4$.
\end{proof}

In order to improve upon Vahlen's result in the case where $a_n=1=a_{n+1}$, it is obvious from Theorem~\ref{theoremeasycase}, Remark~\ref{remarkVahlen}, and Proposition~\ref{prop:counterexample} that in this case also the values of $a_{n-1}$ and/or $a_{n+2}$ should be taken into consideration; we need to refine the triangle $I_{1,1}=\Psi (V_1\cap H_1)$. Let $m=\min \{ a_{n-1},a_{n+2}\}$ and $M=\max \{ a_{n-1},a_{n+2}\}$. Note that the interval $(\tfrac{1}{2},1)$ can be partitioned as follows: let $\Delta_{1,k} = \{ x\in (\tfrac{1}{2},1)\, |\, a_1(x)=1,\, a_2(x)=k\}$, for $k\in\N$, then $\Delta_{1,1} = \Big( \tfrac{1}{2}, \tfrac{2}{3}\Big)$, and $\Delta_{1,k} = \Big[ \tfrac{k}{k+1}, \tfrac{k+1}{k+2}\Big)$, for $k\geq 2$, and we see that:
$$
\left( \tfrac{1}{2},1\right) = \bigcup_{k=1}^{\infty} \Delta_{1,k} = \Big( \tfrac{1}{2}, \tfrac{2}{3}\Big) \cup \bigcup_{k=2}^{\infty} \Big[ \tfrac{k}{k+1}, \tfrac{k+1}{k+2}\Big) ,
$$
Furthermore, for $k\in\N$ define:
$$
V_{1,k} = \Delta_{1,k}\times [0,1]\quad \text{and}\,\, H_{1,k} = [0,1]\times \Delta_{1,k}.
$$ 

Since $a_n=1=a_{n+1}$, it follows that $(\Theta_{n-1}(x),\Theta_n(x))\in \Psi (V_{1,m}\cap\Psi (H_{1,M})$, or $(\Theta_{n-1}(x),\Theta_n(x))\in \Psi (V_{1,M}\cap\Psi (H_{1,m})$, depending on whether $a_{n-1}=m$ and $a_{n+2}=M$, or vice versa. As in the proof of Theorem~\ref{theoremeasycase}, due to symmetry (since reflecting $\Psi (V_{1,k})$ in the line $\beta=\alpha$ gives $\Psi (H_{1,k})$), we only need to determine the various subsets $\Psi (V_{1,k})$ of $\Delta$, and next $\Psi (V_{1,M})\cap \Psi (H_{1,m})$. Note that $\Psi (V_{1,k})$ is a quadrangle, with vertices
$$
\Psi \left( \tfrac{k}{k+1},0\right) = \left( 0,\tfrac{k}{k+1}\right),\,\, \Psi \left( \tfrac{k+1}{k+2},0\right) = \left( 0,\tfrac{k+1}{k+2}\right),\,\,
\Psi \left( \tfrac{k}{k+1},1\right) = \left( \tfrac{k+1}{2k+1},\tfrac{k}{2k+1}\right),
$$
and ${\displaystyle \Psi \left( \tfrac{k+1}{k+2},1\right) = \left( \tfrac{k+2}{2k+3},\tfrac{k+1}{2k+3}\right)}$; see also Figure~\ref{figure3}. Note that
$$
\frac{k+1}{2k+1}\downarrow \frac{1}{2}\quad \text{and that}\quad \frac{k}{2k+1}\uparrow \frac{1}{2},\quad \text{as $k\to \infty$}.
$$
\begin{figure}[h]
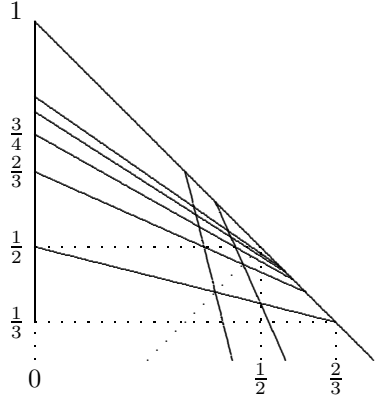

$$
\beginpicture
    \setcoordinatesystem units <0.5cm,0.5cm>
    \setplotarea x from 0 to 12, y from 0 to 9
    \putrule from 0 1 to 0 9
    
    \put {$1$} at -0.5 9.3
    \put {$0$} at 0 -0.5
    \put {$\tfrac{1}{2}$} at 6 -0.5
    \put {$\tfrac{2}{3}$} at 8 -0.5

    \put {$\tfrac{1}{2}$} at -0.5 3
    \put {$\tfrac{2}{3}$} at -0.5 5
    \put {$\tfrac{3}{4}$} at -0.5 6
    \put {$\tfrac{1}{3}$} at -0.5 1

\setlinear \plot
0 9 9 0
/
\setlinear \plot
0 3 8 1
/
\setlinear \plot
0 5 7.2 1.8
/
\setlinear \plot
0 6 6.85 2.14 
/
\setlinear \plot
0 6.6 6.666 2.33333 
/
\setlinear \plot
0 7 6.545454 2.454545 
/
\setlinear \plot
4 5 5.25 0
/
\setlinear \plot
4.8 4.2 6.666666 0
/

\setdots
\putrule from 0 1 to 8 1
\putrule from 0 3 to 6 3
\putrule from 6 0 to 6 3
\putrule from 0 0 to 0 1
\putrule from 8 0 to 8 1 
\setlinear \plot
3 0 6 3
/
\endpicture
$$ \caption[triangle]{$\Psi (V_{1,a})$ stacked ``on top of each other'' for $a=1,2,3,4$, and part of $H_{1,1}$} \label{figure3}
\end{figure}

A short calculation yields that the boundary lines of $\Psi (V_{1,k})$ are given by $\alpha = 0$ (on the ``left-hand side'') and $\beta = \alpha$ (on the ``right-hand side''), that the ``bottom line'' is $\beta = -\frac{k^2\alpha}{(k+1)^2} + \frac{k}{k+1}$, and the ``top line'' is $\beta = -\frac{(k+1)^2\alpha}{(k+2)^2} + \frac{k+1}{k+2}$ (so the same expression as the bottom line, but $k$ replaced by $k+1$); see again Figure~\ref{figure3}.\smallskip\

Note that if $a_{n+2}=k$, we can immediately give an improvement of Vahlen's constant, by calculating the point of intersection of the ``top line'' and the diagonal $\beta=\alpha$. This yields that in case when $a_n=1=a_{n+1}$ and $a_{n+2}=k$, with $k\in\N$, we have
$$
\min \{ \Theta_{n-1}(x),\Theta_n(x)\} < \frac{(k+1)(k+2)}{(k+1)^2+(k+2)^2} \uparrow \tfrac{1}{2},\,\,\, \text{when $k\to\infty$}.
$$
If we determine the point of intersection of the ``bottom line'' and $\beta = \alpha$, we find that in this case (i.e., when $a_n=1=a_{n+1}$ and $a_{n+2}=k$):
$$
\max \{ \Theta_{n-1}(x),\Theta_n(x)\} > \frac{k(k+1)}{k^2+(k+1)^2}.
$$
However, here we have only\footnote{Using only the partial quotient $a_{n-1}$ would have yielded the same result since the reflection of $\Psi (V_{1,k})$ in $\beta = \alpha$ is $\Psi (H_{1,k})$.} used the partial quotient $a_{n+2}$. We have the following result.

\begin{theorem}\label{theoremdifficultcase}
Let $x=[0;a_1,a_2,\dots, a_n,a_{n+1},a_{n+2},\dots]$, $(a_n,a_{n+1})=(1,1)$, $m=\min \{ a_{n-1}, a_{n+2}\}$, and $M=\max \{ a_{n-1}, a_{n+2}\}$. We consider the following cases.
\begin{itemize}
\item[($i$)]
Let $1\leq m=M$, then we have that:
$$
\min\{ \Theta_{n-1}(x),\Theta_n(x)\} \leq \tfrac{(m+1)(m+2)}{(m+1)^2+(m+2)^2} < \tfrac{1}{2},
$$
and
$$
\max\{ \Theta_{n-1}(x),\Theta_n(x)\} > \tfrac{m(m+1)}{m^2+(m+1)^2}\geq \tfrac{2}{5}.
$$

\item[($ii$)]
Let $1\leq m < M$, then we have that:
$$
\min\{ \Theta_{n-1}(x),\Theta_n(x)\} < \tfrac{(m+1)(M+1)}{(m+1)M + (m+2)(M+1)},
$$
and
$$
\max \{ \Theta_{n-1}(x),\Theta_n(x)\} > \tfrac{(m+2)M}{(m+1)M + (m+2)(M+1)}.
$$
\end{itemize}
\end{theorem}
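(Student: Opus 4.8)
The plan is to run the argument along the same lines as the proof of Theorem~\ref{theoremeasycase}, with the quadrangles $\Psi(V_a),\Psi(H_a)$ replaced by the finer pieces $\Psi(V_{1,k}),\Psi(H_{1,k})$. Since $a_n=1=a_{n+1}$, writing $a_{n+2}=j$ and $a_{n-1}=i$ (so $\{i,j\}=\{m,M\}$), the future and past satisfy $t_n\in\Delta_{1,j}$ and $v_n\in\Delta_{1,i}$, hence $(\Theta_{n-1}(x),\Theta_n(x))=\Psi(t_n,v_n)\in\Psi(V_{1,j})\cap\Psi(H_{1,i})$. As $\Psi(H_{1,k})$ is the reflection of $\Psi(V_{1,k})$ in the line $\beta=\alpha$ and the conclusions are symmetric in $\Theta_{n-1},\Theta_n$, it suffices to treat $\Psi(V_{1,M})\cap\Psi(H_{1,m})$, i.e.\ the case $a_{n+2}=M$, $a_{n-1}=m$. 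Because $\Psi$ sends vertical and horizontal segments of $\Omega$ to line segments, this intersection equals $\Psi(\Delta_{1,M}\times\Delta_{1,m})$, the quadrilateral with vertices $\Psi(A),\Psi(B),\Psi(C),\Psi(D)$, where $A,B,C,D$ are the corners of the rectangle $\Delta_{1,M}\times\Delta_{1,m}$; in particular $D=\bigl(\tfrac{M}{M+1},\tfrac{m+1}{m+2}\bigr)$.

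For case~($ii$) the key point is that the coordinate maps $\alpha=\tfrac{v}{1+tv}$ and $\beta=\tfrac{t}{1+tv}$ are \emph{separately} monotone on $\Omega$: $\alpha$ is increasing in $v$ and decreasing in $t$, while $\beta$ is increasing in $t$ and decreasing in $v$. Since $t_n\ge\tfrac{M}{M+1}$ and $v_n<\tfrac{m+1}{m+2}$, this forces $\Theta_{n-1}(x)=\tfrac{v_n}{1+t_nv_n}<\alpha(\Psi(D))$ and $\Theta_n(x)=\tfrac{t_n}{1+t_nv_n}>\beta(\Psi(D))$; equivalently, $\Psi(D)$ is the vertex of the quadrilateral with largest first and smallest second coordinate, and these extrema are not attained since $t_n$ is irrational. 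A short computation gives $\alpha(\Psi(D))=\tfrac{(m+1)(M+1)}{(m+1)M+(m+2)(M+1)}$ and $\beta(\Psi(D))=\tfrac{(m+2)M}{(m+1)M+(m+2)(M+1)}$. Finally, $M>m$ forces $t_n>v_n$, hence $\Theta_{n-1}(x)<\Theta_n(x)$, so the two displayed bounds are exactly the claimed bounds for $\min$ and $\max$, which completes case~($ii$).

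For case~($i$), $m=M$, substituting $m=M$ above only recovers Vahlen's bound $\tfrac12$, because the quadrilateral $\Psi(V_{1,m})\cap\Psi(H_{1,m})$ is then symmetric in $\beta=\alpha$. Here I would instead use the \emph{linear} functional $f=\alpha+\beta=\tfrac{t+v}{1+tv}=\Theta_{n-1}+\Theta_n$, which is strictly increasing in each of $t,v$ (its partials $\tfrac{1-v^2}{(1+tv)^2},\tfrac{1-t^2}{(1+tv)^2}$ are positive as $t,v<1$). On $\Delta_{1,m}\times\Delta_{1,m}$ this yields $\tfrac{2m(m+1)}{m^2+(m+1)^2}<\Theta_{n-1}(x)+\Theta_n(x)<\tfrac{2(m+1)(m+2)}{(m+1)^2+(m+2)^2}$, the bounds being the values of $f$ at the two diagonal corners of the square (neither attained). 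Combining this with $\min\{\alpha,\beta\}\le\tfrac12(\alpha+\beta)\le\max\{\alpha,\beta\}$ gives the two estimates of case~($i$), and the remaining tail bounds $<\tfrac12$ and $\ge\tfrac25$ are immediate because $k\mapsto\tfrac{k(k+1)}{k^2+(k+1)^2}$ increases from $\tfrac25$ at $k=1$ towards $\tfrac12$.

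The one delicate point is the assertion in case~($ii$) that $\Psi(D)$ beats \emph{all three} other vertices in the relevant coordinate (not merely its two neighbours $\Psi(A),\Psi(C)$); carrying out the optimisation directly in the $(t,v)$-variables, where $\alpha$ and $\beta$ are coordinatewise monotone over a product set, makes this automatic and avoids having to compare the four explicit vertices in $\Delta$. Everything else is bookkeeping: the half-open shape of the intervals $\Delta_{1,k}$ together with the irrationality of $t_n$ (and the fact that $t_n\ne v_n$) accounts for the strictness of all the inequalities.
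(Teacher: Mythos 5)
Your argument is correct, and it is in essence the paper's own proof: you identify the same region $\Psi(V_{1,M})\cap\Psi(H_{1,m})=\Psi\bigl(\Delta_{1,M}\times\Delta_{1,m}\bigr)$, the same symmetry reduction, and the same extremal vertex $\Psi\bigl(\tfrac{M}{M+1},\tfrac{m+1}{m+2}\bigr)$ with the same coordinate values. The difference is only in how the extremization is justified: the paper compares the four explicit vertices in $\Delta$ (using in case ($i$) the slopes of the ``top'' and ``right-hand'' lines, and in case ($ii$) the observation that the quadrangle lies above the diagonal), whereas you optimize directly in the $(t,v)$-variables via the separate monotonicity of $\alpha=\tfrac{v}{1+tv}$ and $\beta=\tfrac{t}{1+tv}$ over the product set, and in case ($i$) you use the functional $\alpha+\beta$ together with $\min\le\tfrac12(\alpha+\beta)\le\max$ --- which is exactly the device the paper itself employs in Theorem~\ref{theoremeasycase}($i$), so nothing here is foreign to the paper's framework. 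Your route has the small advantage you point out: coordinatewise monotonicity over a rectangle makes it automatic that the distinguished corner dominates all of the region (not just neighbouring vertices), and it keeps track of which inequalities are strict (via the half-open intervals and $t_n$ irrational) at least as carefully as the paper does; note in this connection that the strict inequality for $\max$ in case ($i$) genuinely needs the expansion to continue beyond $a_{n+2}$ (otherwise $x=[0;m,1,1,m]$ attains equality, as the paper's own example $x_d$ shows), a point on which the paper's proof is itself slightly loose (it concludes only ``$\geq$''). The only superfluous step is your use of $t_n>v_n$ in case ($ii$): since $\min\le\Theta_{n-1}$ and $\max\ge\Theta_n$ always, the two corner bounds already give the stated inequalities without identifying which of $\Theta_{n-1},\Theta_n$ is the minimum.
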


\begin{proof}
From the discussion prior to the statement of the theorem, we see that $\Psi (V_{1,M})\cap \Psi (H_{1,m})$ is a quadrangle with vertices given by the intersection points of the ``top line'' and ``bottom line'' of $\Psi (V_{1,M})$ and the ``left- and right-hand lines'' of $\Psi (H_{1,m})$. Since $V_{1,M}\cap H_{1,m} = \Big[ \tfrac{M}{M+1},\tfrac{M+1}{M+2}\Big) \times \Big[ \tfrac{m}{m+1},\tfrac{m+1}{m+2}\Big)$, an easy calculation yields that these points are given by:
\begin{eqnarray*}
\Psi \left( \tfrac{M+1}{M+2},\tfrac{m+1}{m+2}\right) &=& \left( \tfrac{(m+1)(M+2)}{(m+1)(M+1)+(m+2)(M+2)}, \tfrac{(m+2)(M+1)}{(m+1)(M+1)+(m+2)(M+2)}\right) ,\\
\Psi \left( \tfrac{M+1}{M+2},\tfrac{m}{m+1}\right) &=& \left( \tfrac{m(M+2)}{m(M+1)+(m+1)(M+2)}, \tfrac{(m+1)(M+1)}{m(M+1)+(m+1)(M+2)}\right) ,\\
\Psi \left( \tfrac{M}{M+1},\tfrac{m}{m+1}\right) &=& \left( \tfrac{m(M+1)}{mM+(m+1)(M+1)}, \tfrac{(m+1)M}{mM+(m+1)(M+1)}\right),
\end{eqnarray*}
and $\Psi \left( \tfrac{M}{M+1},\tfrac{m+1}{m+2}\right) = \left( \tfrac{(m+1)(M+1)}{(m+1)M+(m+2)(M+1)}, \tfrac{(m+2)M}{(m+1)M+(m+2)(M+1)}\right)$.
\bigskip\

Case ($i$): In case $1\leq m=M$, we have that the vertices of $\Psi (V_{1,m})\cap \Psi (H_{1,m})$ are given by:
\begin{eqnarray*}
\left( \tfrac{(m+1)(m+2)}{(m+1)^2+(m+2)^2}, \tfrac{(m+1)(m+2)}{(m+1)^2+(m+2)^2}\right), &&\left( \tfrac{m(m+2)}{m(m+1)+(m+1)(m+2)}, \tfrac{(m+1)^2}{m(m+1)+(m+1)(m+2)}\right) ,\\
\left( \tfrac{(m+1)^2}{m(m+1)+(m+1)(m+2)}, \tfrac{m(m+2)}{m(m+1)+(m+1)(m+2)}\right) , && \text{and}\,\, \left( \tfrac{m(m+1)}{m^2+(m+1)^2}, \tfrac{m(m+1)}{m^2+(m+1)^2}\right).
\end{eqnarray*}
Since the slope of the ``top line'' of $\Psi (V_{1,m})$ is $-\frac{(m+1)^2}{(m+2)^2}$, and the slope of the ``right-hand line'' of $\Psi (H_{1,m})$ is $-\frac{(m+2)^2}{(m+1)^2}$, and it follows that for $(\Theta_{n-1}(x),\Theta_n(x))\in \Psi (V_{1,m})\cap \Psi (H_{1,m})$ the minimum of $\Theta_{n-1}(x)$ and $\Theta_n(x)$ is determined by the vertex $\left( \tfrac{(m+1)(m+2)}{(m+1)^2+(m+2)^2}, \tfrac{(m+1)(m+2)}{(m+1)^2+(m+2)^2}\right)$ on the diagonal $\beta =\alpha$:
$$
\min \{ \Theta_{n-1}(x),\Theta_n(x)\} \leq \frac{(m+1)(m+2)}{(m+1)^2+(m+2)^2} < \tfrac{1}{2}.
$$
In a similar way, we see that the vertex $\left( \frac{m(m+1)}{m^2+(m+1)^2}, \frac{m(m+1)}{m^2+(m+1)^2}\right)$ on the diagonal $\beta =\alpha$ yields that:
$$
\max \{ \Theta_{n-1}(x),\Theta_n(x)\} \geq \frac{m(m+1)}{m^2+(m+1)^2} \geq \tfrac{2}{5};
$$
see also Figure~\ref{figure3}.\smallskip\

Case ($ii$): In case $1\leq m<M$, again due to symmetry we only need to consider $\Psi (V_{1,M})\cap \Psi (H_{1,m})$, as $\Psi (V_{1,m})\cap \Psi (H_{1,M})$ is the reflection of $\Psi (V_{1,M})\cap \Psi (H_{1,m})$ in the diagonal $\beta = \alpha$. Note that $\Psi (V_{1,M})\cap \Psi (H_{1,m})$ is ``above'' the diagonal $\beta = \alpha$ (if $M=m+1$, the intersection $\Psi (V_{1,M})\cap \Psi (H_{1,m})$ ``touches'' the diagonal at one of the vertices; see also Figure~\ref{figure3}). Thus, the minimum of $\Theta_{n-1}(x)$ and $\Theta_n(x)$ on $\Psi (V_{1,M})\cap \Psi (H_{1,m})$ is lesser than, or equal to the largest possible value of $\Theta_{n-1}(x)$ on $\Psi (V_{1,M})\cap \Psi (H_{1,m})$, and we conclude that
$$
\min \{ \Theta_{n-1}(x),\Theta_n(x)\} \leq \frac{(m+1)(M+1)}{(m+1)M+(m+2)(M+1)}.
$$
Similarly, the maximum of $\Theta_{n-1}(x)$ and $\Theta_n(x)$ is greater than, or equal to the smallest possible value of $\Theta_n(x)$ on $\Psi (V_{1,M})\cap \Psi (H_{1,m})$, and thus we find that:
$$
\max \{ \Theta_{n-1}(x),\Theta_n(x)\} \geq \frac{(m+2)M}{(m+1)M+(m+2)(M+1)}.
$$
So in case $M=a_{n+2}>a_{n-1}=m$, we see that both values are determined by $\Psi \left( \tfrac{M}{M+1},\tfrac{m+1}{m+2}\right)$.
\end{proof}

\section{Some remarks on sharpening Borel's result}\label{section4}
Note that, it follows from~\eqref{improvementBorel} that whenever $a_{n+1}\geq 2,$ there is already a significant improvement over Borel's one already has a considerable improvement over Borel's result~\eqref{Borel}, and this was also remarked in~\cite{[HN]}. The constant $1/\sqrt{5}$ is replaced by a constant that is at most $1/\sqrt{8}$. Therefore, we can assume that $a_{n+1}=1$. The method we used in Section~\ref{section3} can also be applied in this  case, but now the situation is slightly more complicated. We will return to this in future.

Apart from the RCF, there is a bewildering  number of other continued fraction algorithms. For example, one has the aforementioned $\alpha$-expansions of Nakada (\cite{[N]}), which includes the classical \emph{nearest integer continued fraction expansion} and the \emph{singular continued fraction expansion} as examples. Additionally, there is \emph{Minkowski's Diagonal continued fraction expansion} (\cite{[Kr]}) and Bosma's \emph{optimal continued fraction expansion} (OCF, \cite{[B]}). By definition, for every $x$ each approximation coefficient is smaller than $\tfrac{1}{2}$, and this also holds for the OCF (see also~\cite{[Kr]}). In fact, for the OCF, the minimum of two consecutive approximation coefficients is smaller than $1/\sqrt{5}$. However, for classical continued fraction algorithms such as the NICF the situation is less favorable; as was shown by Tong in 1992 (\cite{[T2]}). Apart from these continued fraction algorithms related to $\text{SL}_2(\Z),$ there are also continued fraction algorithms related to other groups, such as the \emph{Rosen fractions}, which are linked to Hecke groups. It would be interesting to explore whether the approach of Han\u{c}l and his co-authors, as well as the approach of the present paper can be applied to these continued fraction algorithms.

\section*{Acknowledgement}
The research of the first author is supported by the research project ‘Dynamics and Information Research Institute - Quantum Information, Quantum Technologies’ within the agreement between UniCredit Bank and Scuola Normale Superiore (SNS). The first author also acknowledges the support of the Centro di Ricerca Matematica Ennio de Giorgi, SNS and the Institute of Mathematics of the Polish Academy of Sciences for providing excellent working conditions and research travel funds.  The second author acknowledges the hospitality of the Institute of Mathematics of the Polish Academy of Sciences.
The authors would like to thank  the
Erwin Schr\"odinger International Institute for Mathematical Physics in Vienna, where part of this work was completed, for
their hospitality.

\Addresses

\begin{thebibliography}{[W]}
\bibitem{[BJ]} Barbolosi, D.; Jager, H. -- \emph{On a theorem of Legendre in the theory of continued fractions},
J. Th\'eor.\ Nombr\`es Bordx.\ 6, No.~1, 81--94 (1994).
\bibitem{[B]} Bhattarai, Dinesh Sharma -- \emph{Refinement of the theorem of Vahlen}, Unif.\ Distrib.\ Theory 18, No.~1, 1--8 (2023).
\bibitem{[Bor1]} Borel, \'{E}. -- \emph{Contribution a l'analyse arithm\'{e}tiques du continu}, J.\ Math.\ Pures et Appl.\ (5) {\bf 9} (1903), 329--375.
\bibitem{[Bor2]} Borel, \'E. -- \emph{Sur l'approximation des nombres par des nombres rationnels}, C.\ R.\ Acad.\ Sci.\ Paris 136, 1054-–1055 (1903).
\bibitem{[B]} Bosma, Wieb -- \emph{Optimal continued fractions}, Indag.\ Math.\ 49, 353--379 (1987).
\bibitem{[BJW]} Bosma  W.; Jager, H.; Wiedijk, F. -- \emph{Some metrical observations on the approximation by continued fractions}, Indag.\ Math.\ 45, 281--299 (1983).
\bibitem{[BMc]} Bagemihl, F.; McLaughlin,  J.R.\ -- \emph{Generalization of some classical theorems concerning triples of consecutive convergents to simple continued fractions}, J.\ Reine Angew.\ Math.\ 221, 146--149 (1966).
\bibitem{[DK]} K.~Dajani and C.~Kraaikamp, Ergodic theory of numbers, Carus Mathematical Monographs, 29. Mathematical Association of America, Washington DC,
\bibitem{[H]} Han\u{c}l, Jaroslav -- \emph{Sharpening of theorems of Vahlen and Hurwitz and approximation properties of the golden ratio}, Arch.\ Math.\ (Basel) 105 (2015), no.~2, 129--137.
\bibitem{[HB]}   Han\u{c}l, Jaroslav; Bahnerov\'a, Silvie -- \emph{Sharpening of the theorem of Vahlen and related theorems}, J.\ Ramanujan Math.\ Soc.\ 36 (2021), no.~2, 109--121.
\bibitem{[HN]} Han\u{c}l, Jaroslav; Nair, Radhakrishnan -- \emph{On a theorem of Borel on Diophantine approximation}, Ramanujan J.\ 65, No.~2, 897--915 (2024).
\bibitem{[IK]} M.~Iosifescu and C.~Kraaikamp, Metrical theory of continued fractions, Mathematics and its Applications, 547. Kluwer Acedemic Publishers, Dordrecht, 2002.
\bibitem{[J1]} Jager, H. - \emph{The distribution of certain sequences connected with the continued fraction}, Indag.\ Math.\ 48, 61--69 (1986).
\bibitem{[J2]} Jager, H. - \emph{Continued fractions and ergodic theory}, ``Transcendental numbers and related topics''. RIMS Kokyuroku 599, Kyoto University, Kyoto, Japan, 55--59 (1986).
\bibitem{[JK]} Jager, H.; Kraaikamp, C. -- \emph{On the approximation by continued fractions}, Indag.\ Math.\ 51, No.~3, 289--307 (1989).
\bibitem{[JP]} Jurkat, W.B.; A.\ Peyerimhoff -- \emph{Characteristic approximation properties of quadratic irrationals}, Internat.\ J.\ Math.\ \& Math.\ Sci.\ I, 479--496 (1977).
\bibitem{[K]} A.Ya.~Khintchine, Kettenbr\"uche, Teubner, Leibzig, 1956; see also: A. Khintchine, Continued fractions, Noordhoff, Groningen, 1963.
\bibitem{[Kr]} Kraaikamp, Cor -- \emph{A new class of continued fraction expansions}, Acta Arith.\ 57, no.\ 1, 1-–39 (1991).
\bibitem{[L]} A.M. Legendre, Essai sur la théorie des nombres, Duprat, Paris, An VI (1798).
\bibitem{[N]} Nakada, Hitoshi -- \emph{Metrical theory for a class of continued fraction transformations and their natural extensions}, Tokyo J.\ Math.\ 4, 399--426 (1981).
\bibitem{[O]} Obrechkoff, N. -- \emph{Sur l'approximation des nombres irrationnels par des nombres rationnels}, C.R.\ Acad.\ Bulgare Sci.\ 3, No.~1, 1--4 (1951).
\bibitem{[Sch]} W.M.\ Schmidt, Diophantine approximation, Lecture Notes in Mathematics, 785, Springer, Berlin, 1980.
\bibitem{[TI]} Tanaka, Shigeru; Ito, Shunji -- \emph{On a family of continued fraction transformations and their ergodic properties}, Tokyo J.\ Math.\ 4, 153--175 (1981).
\bibitem{[T]} Tong, Jingcheng -- \emph{The conjugate property of the Borel theorem on diophantine approximation}, Math.\ Z.\ 184, 151--153 (1983).
\bibitem{[T2]} Tong, Jingcheng -- \emph{Approximation by nearest integer continued fractions}, Math.\ Scand.\ 71, No.~2, 161--166 (1992).
\bibitem{[V]} Vahlen, K.Th., \"Uber N\"aherungswerte und Kettenbr\"uche, J.\ Reine Angew.\ Math., 115, 221-233, 1895.
\end{thebibliography}
\end{document}